\numberwithin{equation}{section}
\theoremstyle{plain}
\newtheorem{thm}{Theorem}[section]
\newtheorem{lem}[thm]{Lemma}
\theoremstyle{definition}
\newtheorem{rem}[thm]{Remark}
\newcommand{\R}{\mathbb{R}}
\newcommand{\Z}{\mathbb{Z}}
\newcommand{\calF}{\mathcal{F}}
\newcommand{\calL}{\mathcal{L}}
\newcommand{\calS}{\mathcal{S}}
\begin{document}

\title[Fractional integrals on modulation spaces]
{A remark on fractional integrals on \\ modulation spaces}
\author{Mitsuru Sugimoto \and Naohito Tomita}

\address{Mitsuru Sugimoto \\
Department of Mathematics \\
Graduate School of Science \\
Osaka University \\
Toyonaka, Osaka 560-0043, Japan}
\email{sugimoto@math.sci.osaka-u.ac.jp}

\address{Naohito Tomita \\
Department of Mathematics \\
Graduate School of Science \\
Osaka University \\
Toyonaka, Osaka 560-0043, Japan}
\email{tomita@gaia.math.wani.osaka-u.ac.jp}

\keywords{Modulation spaces, fractional integrals}
\subjclass[2000]{42B20, 42B35}


\maketitle

\section{introduction}
The fractional integral operator $I_{\alpha}$
is defined by
\[
I_{\alpha}f(x)
=\frac{1}{\gamma (\alpha )}\int_{\R^n}
\frac{f(y)}{|x-y|^{n-\alpha}}\, dy,
\quad \gamma (\alpha )
=\frac{\pi^{n/2}2^{\alpha}\Gamma (\alpha /2)}{\Gamma ((n-\alpha )/2)},
\]
where $0<\alpha<n$.
The well known 
Hardy-Littlewood-Sobolev theorem says that
$I_{\alpha}$ is bounded from $L^p(\R^n)$ to $L^q(\R^n)$
when $1<p<q<\infty$ and $1/q=1/p-\alpha /n$
(see \cite[Chapter 5, Theorem 1]{Stein}).
We can regard this theorem as information on how the operation
of $I_{\alpha}$ changes the
decay property of functions.
On the other hand,
the operator $I_\alpha$ can be understood
as a differential operator
of $(-\alpha)$-th order since
$\widehat{I_{\alpha}f}=|\xi|^{-\alpha}\widehat{f}$
(\cite[Chapter 5, Lemma 1]{Stein}),
and we can expect an increase in the smoothness
by acting it to functions.
\par
The purpose of this paper is to investigate
the effect of $I_\alpha$ on both decay and smoothness properties.
To study these two properties simultaneously, 
we consider the operation of $I_\alpha$
on the modulation spaces $M^{p,q}$, which were
introduced by Feichtinger \cite{Feichtinger}
(see also Triebel \cite{Triebel}).
We say that $f$ belongs to $M^{p,q}$ if its short
short-time Fourier transform
\[
V_{\varphi}f(x,\xi)
=e^{-ix\cdot\xi}[f*(M_{\xi}\varphi)](x)
=(2\pi)^{-n/2}[\widehat{f}*(M_{-x}\varphi)](\xi)
\]
is in $L^p$ (resp. $L^q$) with respect to $x$ (resp. $\xi$),
where $\varphi$ is the Gauss function $\varphi(t)=e^{-{|t|^2}/2}$.
Although the exact definition will be given in the next section,
we can see here that the decay of $V_{\varphi}f(x,\xi)$
with respect to $x$ is determined by that of $f$,
and the one with respect to $\xi$
is determined by that of $\widehat{f}$,
that is,
the smoothness of $f$.
Hence,
the first index $p$ of $M^{p,q}$ measures
the decay of $f$,
and the second index $q$ of $M^{p,q}$ measures
the smoothness of $f$.
To understand it, we remark that
$C_1(1+|t|)^{a} \le f(t) \le C_2(1+|t|)^{b}$
implies $\widetilde{C_1}(1+|t|)^{a}
\le f*\varphi(t)
\le \widetilde{C_2}(1+|t|)^{b}$,
where $a,b$ are arbitrary real numbers,
since the Gauss function is rapidly decreasing.
These explanations can be found in
Gr\"ochenig \cite[Chapter 11]{Grochenig}.
\par
Since the fractional integral operator $I_{\alpha}$
is a bounded operator
from $L^p(\R^n)$ to $L^q(\R^n)$ of convolution type,
it is easy to see that
$I_{\alpha}$ is bounded
from $M^{p_1,q_1}(\R^n)$ to $M^{p_2,q_2}(\R^n)$ when
\begin{equation}\label{(1.1)}
1/p_2=1/p_1-\alpha/n
\quad \text{and} \quad
q_1=q_2
\end{equation}
(\cite[Theorem 3.2]{Tomita}).
This boundedness says that the smoothness does not change
but the decay of $I_{\alpha}f$ is worse than that of $f$
since $M^{p_1,q_1}(\R^n)\hookrightarrow M^{p_2,q_2}(\R^n)$
in this case (see Section 2 for this embedding).
However, as we have discussed in the above,
we can expect an increase in the smoothness.
Furthermore,
since $I_{\alpha}$ is not bounded on $L^2(\R^n)$
and $M^{2,2}(\R^n)=L^2(\R^n)$,
we can easily prove that $I_{\alpha}$ is not bounded
from $M^{p_1,q_1}(\R^n)$ to $M^{p_2,q_2}(\R^n)$ when
$p_1 \ge p_2$ and $q_1 \ge q_2$
by using duality and interpolation
(see Remark \ref{4.1}).
This means that both decay and smoothness do not increase,
simultaneously.
\par
On the other hand,
Tomita \cite{Tomita} essentially proved that
$I_{\alpha}$ is bounded
from $M^{p_1,q_1}(\R^n)$ to $M^{p_2,q_2}(\R^n)$ when
\begin{equation}\label{(1.2)}
1/p_2<1/p_1-\alpha/n
\quad \text{and} \quad
1/q_2<1/q_1+\alpha/n.
\end{equation}
This boundedness says that the decay of $I_{\alpha}f$
is worse than that of $f$ by the order $\alpha/n$,
but the smoothness of $I_{\alpha}f$ is better than that of $f$
up to the order $\alpha/n$.
This result seems to be reasonable
but there still remain the problems
whether the order $\alpha/n$ is the best possible one or not and
what about the critical cases $1/p_2=1/p_1-\alpha/n$
or $1/q_2=1/q_1+\alpha/n$.
The following theorem is the complete answers to these questions:
\begin{thm}\label{1.1}
Let $0<\alpha<n$ and $1<p_1,p_2,q_1,q_2<\infty$.
Then the fractional integral operator $I_{\alpha}$
is bounded from
$M^{p_1,q_1}(\R^n)$ to $M^{p_2,q_2}(\R^n)$
if and only if
\[
1/p_2 \le 1/p_1-\alpha/n
\quad \text{and} \quad
1/q_2<1/q_1+\alpha/n.
\]
\end{thm}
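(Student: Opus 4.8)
The plan is to reduce everything to the frequency-uniform (discrete) description of the modulation norm, $\|f\|_{M^{p,q}} \approx \bigl\|(\|\Box_k f\|_{L^p})_{k\in\Z^n}\bigr\|_{\ell^q}$, where $\Box_k$ localizes $\widehat f$ to the unit cube centered at $k\in\Z^n$; this is equivalent to the short-time Fourier description and lets me treat $I_\alpha$ as a Fourier multiplier cube by cube. I split the multiplier as $|\xi|^{-\alpha} = |\xi|^{-\alpha}\chi(\xi) + |\xi|^{-\alpha}(1-\chi(\xi)) =: m_{\mathrm{low}} + m_{\mathrm{high}}$, with $\chi$ a smooth cutoff equal to $1$ near the origin and supported in $\{|\xi|\le 2\}$, writing $I_\alpha = I_\alpha^{\mathrm{low}} + I_\alpha^{\mathrm{high}}$ accordingly. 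The two pieces are responsible for the two conditions: the singular piece $m_{\mathrm{low}}$ produces the decay loss and the condition on $p$, while the smooth piece $m_{\mathrm{high}}$ produces the smoothing and the condition on $q$.

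For sufficiency, $I_\alpha^{\mathrm{low}}f$ has frequency support in $\{|\xi|\le 2\}$, so it meets only $O(1)$ cubes and its modulation norm is comparable to $\|I_\alpha^{\mathrm{low}}f\|_{L^{p_2}}$. Since $m_{\mathrm{low}}$ is the Fourier transform of a kernel dominated by $(1+|x|)^{\alpha-n}$, the Hardy--Littlewood--Sobolev inequality gives $\|I_\alpha^{\mathrm{low}}f\|_{L^{p_2^\ast}}\lesssim\|f\|_{L^{p_1}}$ at the critical exponent $1/p_2^\ast=1/p_1-\alpha/n$; Bernstein's inequality applied to the band-limited output then passes from $p_2^\ast$ to any $p_2\ge p_2^\ast$, that is to every $p_2$ with $1/p_2\le 1/p_1-\alpha/n$, and $\|f\|_{L^{p_1}}\lesssim\|f\|_{M^{p_1,q_1}}$ finishes this piece using only the condition on $p$ (no condition on $q$ appears, since only finitely many cubes are involved). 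For $I_\alpha^{\mathrm{high}}$ I use that on the cube centered at $k$ with $|k|\gtrsim 1$ the symbol is $|\xi|^{-\alpha}\approx |k|^{-\alpha}$ with all derivatives scaling correctly, so $\|\Box_k I_\alpha^{\mathrm{high}}f\|_{L^{p_2}}\lesssim |k|^{-\alpha}\|\Box_k f\|_{L^{p_2}}\lesssim |k|^{-\alpha}\|\Box_k f\|_{L^{p_1}}$, the last step again by Bernstein since $p_1\le p_2$. Summing in $\ell^{q_2}$ reduces the claim to the weighted sequence embedding $\ell^{q_1}\hookrightarrow \ell^{q_2}(|k|^{-\alpha q_2})$. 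When $q_2\ge q_1$ this is immediate; when $q_2<q_1$ it holds, by H\"older, exactly when $\sum_k |k|^{-\alpha q_1 q_2/(q_1-q_2)}<\infty$, i.e.\ when that exponent exceeds $n$, which is precisely $1/q_2<1/q_1+\alpha/n$. This proves sufficiency in all cases, including the critical line $1/p_2=1/p_1-\alpha/n$ (invisible to $I_\alpha^{\mathrm{high}}$) that is not reachable by interpolating the earlier results \eqref{(1.1)} and \eqref{(1.2)}, and it exhibits the source of the strict inequality.

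For necessity of $1/q_2<1/q_1+\alpha/n$ I run the previous computation backwards: when $1/q_2\ge 1/q_1+\alpha/n$ the weighted embedding fails, so I choose coefficients $(a_k)$ with $(a_k)\in\ell^{q_1}$ but $(|k|^{-\alpha}a_k)\notin\ell^{q_2}$ and realize them by a Gabor-type superposition $f=\sum_k a_k M_k\psi$ of frequency-translated copies of a fixed bump $\psi$, for which $\|\Box_k f\|_{L^p}\approx a_k$ and $I_\alpha$ acts on the $k$-th cube as multiplication by $\approx |k|^{-\alpha}$; a truncated family then has $\|I_\alpha f_N\|_{M^{p_2,q_2}}/\|f_N\|_{M^{p_1,q_1}}\to\infty$. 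For necessity of $1/p_2\le 1/p_1-\alpha/n$ I use the scaling covariance $I_\alpha U_\lambda = \lambda^{-\alpha}U_\lambda I_\alpha$, where $U_\lambda f=f(\lambda\,\cdot)$. Fixing a Schwartz $f$ band-limited away from the origin (so both $f$ and $I_\alpha f$ are Schwartz) and letting $\lambda\to0$, the dilates become low-frequency and $\|U_\lambda g\|_{M^{p,q}}\approx\lambda^{-n/p}\|g\|_{L^p}$; boundedness then forces $\lambda^{-\alpha-n/p_2}\lesssim\lambda^{-n/p_1}$ as $\lambda\to0^+$, which is exactly $1/p_2\le 1/p_1-\alpha/n$.

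I expect the main obstacle to be the uniform-in-$k$ control of $I_\alpha^{\mathrm{high}}$: making precise that the normalized symbol $|k|^{\alpha}|\xi|^{-\alpha}$ on the $k$-th cube (together with its few overlapping neighbors) is a Fourier multiplier on $L^{p}$, and on the corresponding modulation component, with operator norm bounded independently of $k$. After rescaling each cube to the unit cube the derivatives of $|\xi|^{-\alpha}$ gain the matching powers of $|k|^{-1}$, so a Mikhlin/Bernstein estimate yields uniform bounds, but organizing this together with the almost-orthogonality of the $\Box_k$ is the technical heart; a secondary delicate point is verifying the two-sided norm estimates for the Gabor superposition used in the necessity of the $q$-condition.
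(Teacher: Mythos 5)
Your overall route is essentially the paper's: the same frequency-uniform decomposition \eqref{(2.1)}, Hardy--Littlewood--Sobolev plus a Bernstein-type estimate (Lemma \ref{2.1}) for the low-frequency block, uniform Mikhlin bounds for the normalized symbols $|k|^{\alpha}|\xi|^{-\alpha}\varphi(\xi-k)$ (the paper's Lemma \ref{3.1}) combined with H\"older's inequality for the high-frequency blocks, a dilation argument for the necessity of $1/p_2\le 1/p_1-\alpha/n$ (Lemma \ref{4.2}), and a Gabor-type superposition $\sum_k a_k M_k\Psi$ witnessing the failure of the weighted $\ell^{q_1}\to\ell^{q_2}$ embedding for the necessity of the $q$-condition (Lemma \ref{4.4}, where the paper makes your abstract ``choose coefficients'' explicit via $a_k=|k|^{-n/q_1}(\log|k|)^{-(1+\epsilon)/q_1}$). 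The technical points you flag at the end (uniform multiplier bounds after recentering, the two-sided estimate $\|\varphi(D-k)I_\alpha(M_k\Psi)\|_{L^{p_2}}\sim|k|^{-\alpha}$, the fattened cubes $\psi(D-k)$ needed because $\varphi^2\neq\varphi$) are all genuine and all handled in the paper by Lemmas \ref{2.2} and \ref{3.1}, the latter applied with exponent $-\alpha$ as well as $\alpha$.

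However, one step in your sufficiency argument is false as written: the inequality $\|f\|_{L^{p_1}}\lesssim\|f\|_{M^{p_1,q_1}}$, i.e.\ the embedding $M^{p_1,q_1}\hookrightarrow L^{p_1}$, does not hold for general $q_1\in(1,\infty)$. For instance, for $p_1=2$ and $q_1>2$ one has $M^{2,2}=L^2\subsetneq M^{2,q_1}$, and indeed your own necessity construction produces the counterexample: $f=\sum_k a_k M_k\Psi$ with $(a_k)\in\ell^{q_1}\setminus\ell^2$ lies in $M^{2,q_1}$ but not in $L^2$. So the chain $\|I_\alpha^{\mathrm{low}}f\|_{L^{p_2^\ast}}\lesssim\|f\|_{L^{p_1}}\lesssim\|f\|_{M^{p_1,q_1}}$ breaks at the second inequality, and the parenthetical ``only finitely many cubes are involved'' does not rescue it, because that remark concerns the output $I_\alpha^{\mathrm{low}}f$, while the problem is on the input side. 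The repair is immediate and is exactly what the paper does in \eqref{(3.5)}: since $m_{\mathrm{low}}$ is supported in $\{|\xi|\le 2\}$, you have $I_\alpha^{\mathrm{low}}f=I_\alpha^{\mathrm{low}}(\widetilde{\chi}(D)f)$ for a cutoff $\widetilde{\chi}$ equal to $1$ on $\{|\xi|\le 2\}$, so HLS should be applied to $\widetilde{\chi}(D)f$ rather than to $f$; then $\|\widetilde{\chi}(D)f\|_{L^{p_1}}$ is controlled by the finitely many blocks $\|\varphi(D-k)f\|_{L^{p_1}}$ with $|k|\lesssim 1$, hence by $\|f\|_{M^{p_1,q_1}}$ with no condition on $q_1$. (The paper implements this by commuting: $\varphi(D)(I_\alpha f)=I_\alpha(\varphi(D)f)$ and applying HLS to $\varphi(D)f$.) With this correction your argument is complete and matches the paper's proof.
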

Theorem \ref{1.1} says that the boundedness of $I_\alpha$ holds
even if 
$1/p_2=1/p_1-\alpha/n$,
$1/q_2<1/q_1+\alpha/n$
and $q_1>q_2$.
This is a strictly improvement of \eqref{(1.1)} and \eqref{(1.2)}.
However, the boundedness does not hold
if the second index is critical,
that is, $1/q_2=1/q_1+\alpha/n$.
We remark that \cite{Tomita} did not treat
the necessary condition for the boundedness.
\par
In order to consider the detailed behavior
of the first and second indices,
we introduce the more general operator $I_{\alpha,\beta}$ defined by
$I_{\alpha,\beta}=I_{\alpha}+I_{\beta}$,
that is,
\[
I_{\alpha,\beta}f
=\calF^{-1}\left[\left(|\xi|^{-\alpha}+|\xi|^{-\beta}
\right)\widehat{f}\right],
\]
where $0<\beta \le \alpha<n$.
We note that
$|\xi|^{-\alpha}+|\xi|^{-\beta} \sim |\xi|^{-\alpha}$
in the case $|\xi| \le 1$,
and $|\xi|^{-\alpha}+|\xi|^{-\beta} \sim |\xi|^{-\beta}$
in the case $|\xi| \ge 1$.
Since $I_{\alpha,\alpha}=2I_{\alpha}$,
we have Theorem \ref{1.1} as a corollary of the following
main result in this paper:
\begin{thm}\label{1.2}
Let $0<\beta\le\alpha<n$ and $1<p_1,p_2,q_1,q_2<\infty$.
Then $I_{\alpha,\beta}$ is bounded
from $M^{p_1,q_1}(\R^n)$ to $M^{p_2,q_2}(\R^n)$
if and only if
\[
1/p_2 \le 1/p_1-\alpha/n
\quad \text{and} \quad
1/q_2<1/q_1+\beta/n.
\]
\end{thm}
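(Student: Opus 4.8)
The plan is to pass from the short-time Fourier transform to the equivalent description of $M^{p,q}$ by a uniform frequency decomposition: fixing a smooth partition of unity $\{\sigma_k\}_{k\in\Z^n}$ subordinate to the unit cubes $Q_k=k+[-1/2,1/2]^n$ and writing $\Box_k=\sigma_k(D)$, one has $\|f\|_{M^{p,q}}\sim\bigl\|\{\|\Box_k f\|_{L^p}\}_k\bigr\|_{\ell^q}$. Two elementary facts drive everything. First, if $\widehat{g}$ is supported in a translate of a fixed cube then $\|g\|_{L^{p_2}}\lesssim\|g\|_{L^{p_1}}$ whenever $p_2\ge p_1$, with constant independent of the translate, since frequency translation is modulation and preserves every $L^p$ norm (a Bernstein--Nikolskii inequality). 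Second, on $Q_k$ with $k\ne0$ the symbol satisfies $|\xi|^{-\alpha}+|\xi|^{-\beta}\sim|k|^{-\beta}$, while near the origin it is $\sim|\xi|^{-\alpha}$. I would therefore split $I_{\alpha,\beta}=m_{\mathrm{low}}(D)+m_{\mathrm{high}}(D)$ using a cutoff at $|\xi|\sim1$ and treat the two pieces by entirely different mechanisms.

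For the high-frequency piece the two facts above give $\|\Box_k m_{\mathrm{high}}(D)f\|_{L^{p_2}}\lesssim|k|^{-\beta}\|\Box_k f\|_{L^{p_1}}$ for each $k$ (the $L^{p_1}\to L^{p_2}$ step is free because $p_2\ge p_1$, which is forced by the first-index condition). Taking $\ell^{q_2}$ norms and applying H\"older with $1/r=1/q_2-1/q_1$ reduces matters to $\{|k|^{-\beta}\}\in\ell^r$, i.e. to $\beta r>n$, which is exactly $1/q_2<1/q_1+\beta/n$; the case $q_2\ge q_1$ is handled directly by $\ell^{q_1}\hookrightarrow\ell^{q_2}$. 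For the low-frequency piece, $m_{\mathrm{low}}$ is supported in $|\xi|\lesssim1$, so $m_{\mathrm{low}}(D)f$ occupies only finitely many cubes and the $\ell^{q_2}$ summation is trivial; its convolution kernel carries the tail $|x|^{-(n-\alpha)}$, so a Hardy--Littlewood--Sobolev estimate followed by the band-limited embedding gives $\|m_{\mathrm{low}}(D)f\|_{L^{p_2}}\lesssim\|f\|_{M^{p_1,q_1}}$ precisely when $1/p_2\le1/p_1-\alpha/n$. Summing the two pieces proves sufficiency.

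For necessity of the first index I would dilate: with $f_\lambda(x)=f(\lambda x)$ and $\lambda\to0$, the transform $\widehat{f_\lambda}$ concentrates in $Q_0$, both modulation norms collapse to $L^p$ norms, and since $m\sim|\xi|^{-\alpha}$ on this support the homogeneity $I_\alpha(f(\lambda\cdot))=\lambda^{-\alpha}(I_\alpha f)(\lambda\cdot)$ forces $\lambda^{-\alpha-n/p_2}\lesssim\lambda^{-n/p_1}$ as $\lambda\to0$, i.e. $1/p_2\le1/p_1-\alpha/n$. For necessity of the \emph{strict} second index I would use a spread-out high-frequency tester $\widehat{f}=\sum_k c_k\,\psi(\cdot-k)$ with $\psi$ localized near the origin, so that $\|f\|_{M^{p_1,q_1}}\sim\|\{c_k\}\|_{\ell^{q_1}}$ and $\|I_{\alpha,\beta}f\|_{M^{p_2,q_2}}\sim\|\{|k|^{-\beta}c_k\}\|_{\ell^{q_2}}$. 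Choosing $c_k=|k|^{-n/q_1}(\log|k|)^{-1/q_2}$ keeps $f\in M^{p_1,q_1}$ while, at the borderline $1/q_2=1/q_1+\beta/n$, the output norm is comparable to $\sum_j j^{-1}=\infty$; monotonicity of $M^{p_2,q_2}$ in $q_2$ then rules out every $1/q_2\ge1/q_1+\beta/n$.

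The first index is thus governed by soft scaling and the classical Hardy--Littlewood--Sobolev inequality, and I expect it to be routine. The real work, and the main obstacle, is the endpoint bookkeeping in the second index: one must match the convergence of $\sum_k|k|^{-\beta r}$ to the strict inequality on both the sufficiency and the necessity sides, and calibrate the logarithmic factor in the counterexample so that it lies in the source but not in the target. Equally, one must justify replacing the continuous symbol by its cube-values $|k|^{-\beta}$---that is, control the off-diagonal interaction of $m_{\mathrm{high}}(D)$ between neighbouring cubes---so that the clean sequence-space model above is legitimate; this almost-orthogonality step is where the estimates need care.
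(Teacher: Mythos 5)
Your proposal is correct and takes essentially the same route as the paper: the same frequency-uniform block decomposition of $M^{p,q}$, the same low/high splitting handled by Hardy--Littlewood--Sobolev plus a Bernstein-type embedding on the low piece and by cube-value multiplier bounds plus H\"older with $1/r=1/q_2-1/q_1$ on the high piece, the same dilation argument for the first index, and the same log-calibrated lacunary counterexample for the critical second index (your exponent $(\log|k|)^{-1/q_2}$ works just as well as the paper's $(\log|k|)^{-(1+\epsilon)/q_1}$). The almost-orthogonality step you flag as the main obstacle is precisely the paper's Lemma 3.1 --- a uniform Mikhlin--H\"ormander bound for the translated symbols $|k|^{\gamma}|\xi|^{-\gamma}\varphi(\xi-k)$, valid for any real $\gamma$ --- and applying it with $\gamma=-\beta$ also supplies the lower bound $\|I_{\alpha,\beta}(M_k\Psi)\|_{L^{p_2}}\gtrsim |k|^{-\beta}$ needed to make the equivalence ``$\sim$'' in your counterexample rigorous.
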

Finally we mention some related results.
Cowling, Meda and Pasquale \cite{Cowling} proved that
$I_{\alpha,\beta}$ is bounded
from $(L^{p_1}, \ell^{q_1})$ to $(L^{p_2}, \ell^{q_2})$ when
\[
1/p_2\ge 1/p_1-\beta/n
\quad \text{and} \quad
1/q_2\le 1/q_1-\alpha/n,
\]
where $(L^{p_i}, \ell^{q_i}), \, i=1,2,$ are amalgam spaces defined by
\[
\|f\|_{(L^{p}, \ell^{q})}=
\left( \sum_{k \in \Z^n}\|\varphi(\cdot-k)f\|_{L^p}^q\right)^{1/q}
\]
with an appropriate (see \eqref{(3.1)}) cut-off function $\varphi$.
The result between $I_{\alpha,\beta}$
and amalgam spaces of Lorentz type
can be also found in Cordero and Nicola \cite{Cordero}.
The definition of amalgam spaces is
based on a similar idea to that of modulation spaces
since we have the equivalence
\[
\|f\|_{M^{p,q}}\sim
\left( \sum_{k \in \Z^n}\|\calF^{-1}[\varphi(\cdot-k)\widehat{f}]
\|_{L^p}^q\right)^{1/q}.
\]
Roughly speaking, amalgam spaces are defined by a decomposition of
the function $f$ while the modulation spaces
by the same decomposition of $\widehat{f}$.
Theorem \ref{1.2} also shows a difference
between the modulation spaces and amalgam spaces,
because the boundedness of $I_{\alpha,\beta}$
on the modulation spaces
does not hold if the second index is critical.

\section{Preliminaries}
Let $\calS(\R^n)$ and $\calS'(\R^n)$ be the Schwartz spaces of
all rapidly decreasing smooth functions
and tempered distributions,
respectively.
We define the Fourier transform $\calF f$
and the inverse Fourier transform $\calF^{-1}f$
of $f \in \calS(\R^n)$ by
\[
\calF f(\xi)
=\widehat{f}(\xi)
=\int_{\R^n}e^{-i\xi \cdot x}\, f(x)\, dx
\quad \text{and} \quad
\calF^{-1}f(x)
=\frac{1}{(2\pi)^n}
\int_{\R^n}e^{ix\cdot \xi}\, f(\xi)\, d\xi.
\]
\par
We introduce the modulation spaces
based on Gr\"ochenig \cite{Grochenig}.
Fix a function $\varphi \in \calS(\R^n)\setminus \{ 0 \}$
(called the {\it window function}).
Then the short-time Fourier transform $V_{\varphi}f$ of
$f \in \calS'(\R^n)$ with respect to $\varphi$
is defined by
\[
V_{\varphi}f(x,\xi)
=(f, M_{\xi}T_x \varphi)
\qquad \text{for} \ x, \xi \in \R^n,
\]
where $M_{\xi}\varphi(t)=e^{i\xi \cdot t}\varphi(t)$,
$T_x \varphi(t)=\varphi(t-x)$
and $(\cdot,\cdot)$ denotes the inner product on $L^2(\R^n)$.
We note that,
for $f \in \calS'(\R^n)$,
$V_{\varphi}f$ is continuous on $\R^{2n}$
and $|V_{\varphi}f(x,\xi)|\le C(1+|x|+|\xi|)^N$
for some constants $C,N \ge 0$
(\cite[Theorem 11.2.3]{Grochenig}).
Let $1\le p,q \le \infty$.
Then the modulation space $M^{p,q}(\R^n)$
consists of all $f \in \calS'(\R^n)$
such that
\[
\|f\|_{M^{p,q}}
=\|V_{\varphi}f\|_{L^{p,q}}
=\left\{ \int_{\R^n} \left(
\int_{\R^n} |V_{\varphi}f(x,\xi)|^{p}\, dx
\right)^{q/p} d\xi \right\}^{1/q}
< \infty,
\]
with usual modification when $p=\infty$ or $q=\infty$.
We note that
$M^{2,2}(\R^n)=L^2(\R^n)$
(\cite[Proposition 11.3.1]{Grochenig}),
$M^{p,q}(\R^n)$ is a Banach space
(\cite[Proposition 11.3.5]{Grochenig}),
$\calS(\R^n)$ is dense in $M^{p,q}(\R^n)$
if $1 \le p,q<\infty$
(\cite[Proposition 11.3.4]{Grochenig}),
and $M^{p_1,q_1}(\R^n) \hookrightarrow M^{p_2,q_2}(\R^n)$
if $p_1 \le p_2$ and $q_1 \le q_2$
(\cite[Theorem 12.2.2]{Grochenig}).
The definition of $M^{p,q}(\R^n)$ is independent
of the choice of the window function
$\varphi \in \calS(\R^n)\setminus \{ 0 \}$,
that is,
different window functions
yield equivalent norms
(\cite[Proposition 11.3.2]{Grochenig}).
Let $\varphi \in \calS(\R^n)$
be such that
$\mathrm{supp}\, \varphi$ is compact and
$\left|\sum_{k \in \Z^n}\varphi(\xi-k)\right| \ge C>0$
for all $\xi \in \R^n$.
Then it is well known that
\begin{equation}\label{(2.1)}
\|f\|_{M^{p,q}}\sim
\left( \sum_{k \in \Z^n}\|\varphi(D-k)f\|_{L^p}^q\right)^{1/q},
\end{equation}
where $\varphi(D-k)f=\calF^{-1}[\varphi(\cdot-k)\widehat{f}]$
(see, for example, \cite{Triebel}).
The following two lemmas
will be used in the sequel.
\begin{lem}
[{\cite[Proposition, 1.3.2]{Triebel-Book}},{\cite[Lemma 3.1]{Wang}}]
\label{2.1}
Let $1 \le p \le q \le \infty$
and $\Omega \subset \R^n$ be a compact set
with $\mathrm{diam}\, \Omega<R$.
Then there exists a constant $C>0$ such that
$\|f\|_{L^q}\le C\|f\|_{L^p}$
for all $f \in \calS(\R^n)$
with $\mathrm{supp}\, \widehat{f} \subset \Omega$,
where $C$ depends only on $p,q,n$ and $R$.
In particular,
\[
\|\varphi(D-k)f\|_{L^q}
\le C\|\varphi(D-k)f\|_{L^p}
\qquad \text{for all $f \in \calS(\R^n)$ and $k \in \Z^n$},
\]
where $\varphi$ is the Schwartz function with compact support.
\end{lem}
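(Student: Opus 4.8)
The plan is to prove the main inequality for an arbitrary $f$ with $\mathrm{supp}\,\widehat f\subset\Omega$ and then obtain the ``in particular'' statement by applying it blockwise. The underlying mechanism is that a function whose frequencies are confined to a fixed ball can be recovered by convolution against a single fixed Schwartz kernel, after which Young's inequality trades integrability exponents for free.

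First I would reduce to a ball centered at the origin. Since $\mathrm{diam}\,\Omega<R$, picking any $\xi_0\in\Omega$ gives $\Omega\subset\{\xi:|\xi-\xi_0|<R\}$. Passing to the modulation $g(x)=e^{-i\xi_0\cdot x}f(x)$ preserves every $L^p$ norm, while $\widehat g(\xi)=\widehat f(\xi+\xi_0)$ moves the frequency support into $\{|\xi|<R\}$; so it suffices to treat the case $\mathrm{supp}\,\widehat f\subset\{|\xi|<R\}$. Now fix $\psi\in\calS(\R^n)$ with $\widehat\psi\equiv1$ on $\{|\xi|\le R\}$. Then $\widehat f=\widehat\psi\,\widehat f$, i.e.\ $f=\psi*f$, and with $r\in[1,\infty]$ defined by $1+1/q=1/r+1/p$ --- admissible because $p\le q$ forces $1/r=1+1/q-1/p\in[0,1]$ --- Young's inequality yields
\[
\|f\|_{L^q}=\|\psi*f\|_{L^q}\le\|\psi\|_{L^r}\,\|f\|_{L^p}.
\]
This is the claim with $C=\|\psi\|_{L^r}$, a constant depending only on $n,R$ (through $\psi$) and on $p,q$ (through $r$).

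For the second assertion I would apply this general bound to $\varphi(D-k)f$, whose Fourier transform is supported in $\mathrm{supp}\,\varphi(\cdot-k)$, a translate of the fixed compact set $\mathrm{supp}\,\varphi$ and hence of constant diameter in $k$. Choosing $R$ above that common diameter produces one kernel $\psi$, and therefore one constant $C$, serving all $k\in\Z^n$ simultaneously. This result is classical and not hard; the only genuinely important point is the uniformity in $k$, and it is exactly the translation-invariance of the construction --- only the diameter of the spectrum, never its location, enters the argument --- that secures it.
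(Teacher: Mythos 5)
Your proof is correct. A point of comparison: the paper itself does not prove this lemma at all --- it is quoted from Triebel's book and from Wang--Zhao--Guo --- so there is no internal argument to measure yours against; what you have written is the standard proof of the Nikol'skii--Bernstein inequality in the Banach range, and every step checks out: the modulation $g(x)=e^{-i\xi_0\cdot x}f(x)$ preserves all $L^p$ norms and recenters the spectrum (with the paper's convention $\widehat{g}(\xi)=\widehat{f}(\xi+\xi_0)$), the reproducing identity $f=\psi*f$ holds because $\widehat{\psi}\equiv 1$ on the shifted spectrum, and the exponent $1/r=1+1/q-1/p$ lies in $[0,1]$ precisely because $1\le p\le q\le\infty$, so Young's inequality applies with the fixed constant $\|\psi\|_{L^r}$. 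Your observation that only the diameter, not the location, of the spectrum enters is exactly the uniformity in $k$ that the paper needs in \eqref{(3.5)}, \eqref{(3.6)} and Lemma \ref{4.2}. The one thing worth knowing is that the cited sources prove a stronger statement, valid for $0<p\le q\le\infty$: in the quasi-Banach range $p<1$ Young's inequality is unavailable and one must instead combine a local-means or maximal-function argument with the compact spectrum. Your argument cannot be pushed below $p=1$, but the lemma as stated in this paper only claims $1\le p$, and the paper only ever uses it with $1<p_1,p_2<\infty$, so your proof fully suffices for its role here.
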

\begin{lem}[{\cite[Chapter 4, Theorem 3]{Stein}}]\label{2.2}
Let $1<p<\infty$.
If $m \in C^{[n/2]+1}(\R^n\setminus\{0\})$ satisfies
\[
|\partial^{\gamma}m(\xi)| \le C_{\gamma}|\xi|^{-|\gamma|}
\qquad \text{for all $|\gamma| \le [n/2]+1$},
\]
then there exists a constant $C>0$ such that
\[
\|m(D)f\|_{L^p} \le C\|f\|_{L^p}
\qquad \text{for all $f \in \calS(\R^n)$},
\]
where $C$ depends only on
$p,n$ and $C_{\gamma}, \, |\gamma| \le [n/2]+1$.
\end{lem}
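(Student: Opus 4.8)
The plan is to recognize this as the Mikhlin--H\"ormander multiplier theorem and to prove it through the Calder\'on--Zygmund theory, reducing the $L^p$ statement to an $L^2$ bound together with a single kernel estimate. Write $T=m(D)$, so that $Tf=K*f$ with $K=\calF^{-1}m\in\calS'(\R^n)$. The case $|\gamma|=0$ of the hypothesis gives $\|m\|_{L^\infty}\le C_0$, whence Plancherel's theorem immediately yields $\|Tf\|_{L^2}\le C_0\|f\|_{L^2}$. The substance of the theorem is therefore the behaviour of the kernel away from the origin, and the goal is to show that $K$ coincides on $\R^n\setminus\{0\}$ with a locally integrable function satisfying the H\"ormander cancellation condition
\[
\sup_{y\ne 0}\int_{|x|\ge 2|y|}|K(x-y)-K(x)|\,dx<\infty .
\]
Granting this, the Calder\'on--Zygmund theorem shows that $T$ is of weak type $(1,1)$; interpolating with the $L^2$ bound (Marcinkiewicz) gives boundedness on $L^p$ for $1<p\le 2$, and the range $2\le p<\infty$ follows by duality, since the adjoint of $m(D)$ has symbol $\overline{m(-\xi)}$, which satisfies the same derivative bounds with the same constants.

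To estimate the kernel I would decompose $m$ dyadically. Fix $\psi\in C^\infty_c(\R^n)$ supported in the annulus $\{1/2\le|\xi|\le 2\}$ with $\sum_{j\in\Z}\psi(2^{-j}\xi)=1$ for $\xi\ne 0$, set $m_j(\xi)=m(\xi)\psi(2^{-j}\xi)$ and $K_j=\calF^{-1}m_j$, so that $K=\sum_j K_j$. Since $m_j$ is supported where $|\xi|\sim 2^j$, the Mikhlin bounds give $\|\partial^\gamma m_j\|_{L^2}^2\le C\,2^{j(n-2|\gamma|)}$ for $|\gamma|\le N:=[n/2]+1$, and as $x^\gamma K_j$ is, up to constants, the inverse transform of $\partial^\gamma m_j$, Plancherel converts this into $\|\,|x|^{|\gamma|}K_j\|_{L^2}\le C\,2^{j(n/2-|\gamma|)}$. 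The decisive step is then to bound $\int|K_j|\,dx$ by splitting
\[
\int_{\R^n}|K_j(x)|\,dx
\le\Bigl(\int_{\R^n}(1+2^j|x|)^{-2N}\,dx\Bigr)^{1/2}
\Bigl(\int_{\R^n}(1+2^j|x|)^{2N}|K_j(x)|^2\,dx\Bigr)^{1/2},
\]
and here the choice $N=[n/2]+1$ is exactly what makes the first factor finite: the integral $\int(1+2^j|x|)^{-2N}\,dx$ converges precisely because $2N>n$, and it equals $C\,2^{-jn}$. Combined with the weighted $L^2$ bounds above, both factors are of size $2^{\mp jn/2}$, so that $\int|K_j|\,dx\le C$ uniformly in $j$.

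The same scheme applied to $\nabla K_j=\calF^{-1}[i\xi\,m_j]$, whose symbol carries an extra factor of size $2^j$ on the relevant annulus, yields $\int|\nabla K_j|\,dx\le C\,2^{j}$ and, with the weight, $\int|x|\,|\nabla K_j|\,dx\le C$; more precisely one obtains the telescoping estimate
\[
\int_{|x|\ge 2|y|}|K_j(x-y)-K_j(x)|\,dx\le C\min(2^j|y|,1),
\]
and summing over $j\in\Z$ (the regimes $2^j|y|\le 1$ and $2^j|y|>1$ each contribute a geometric series) gives the H\"ormander condition with a bound independent of $y$. I expect the main obstacle to lie entirely in this kernel analysis: establishing the uniform-in-$j$ integral bounds and carrying out their summation, and in particular tracking that it is the threshold $N=[n/2]+1>n/2$, rather than any smaller number of derivatives, that guarantees the convergence of $\int(1+2^j|x|)^{-2N}\,dx$ and hence the absolute integrability of each dyadic kernel piece. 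Once these estimates are in place, the passage to $L^p$ is the standard Calder\'on--Zygmund--Marcinkiewicz--duality argument described above.
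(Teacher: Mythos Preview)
The paper does not prove this lemma at all: it is stated with a citation to Stein, \emph{Singular Integrals and Differentiability Properties of Functions}, Chapter~4, Theorem~3, and used as a black box. Your outline is correct and is in fact the proof given in that reference---the Mikhlin--H\"ormander theorem established via a Littlewood--Paley dyadic decomposition of the symbol, $L^2$-weighted kernel estimates exploiting $2N=2([n/2]+1)>n$, the H\"ormander integral condition, and then the Calder\'on--Zygmund/Marcinkiewicz/duality machinery. So there is no divergence in approach to compare: you have supplied the argument the paper chose to outsource.
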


\section
{Sufficient condition for the boundedness of fractional integral operators}
In this section,
we prove the $\lq\lq$if" part of Theorem \ref{1.2}.
Let $\varphi \in \calS(\R^n)$
be such that
\begin{equation}\label{(3.1)}
\varphi=1 \ \text{on} \ [-1/2,1/2]^n,
\quad
\mathrm{supp}\, \varphi \subset [-3/4,3/4]^n,
\quad
\left|\sum_{k \in \Z^n}\varphi(\xi-k)\right| \ge C>0
\end{equation}
for all $\xi \in \R^n$.
\begin{lem}\label{3.1}
Let $1<p<\infty$, $\alpha \in \R$ and
\begin{equation}\label{(3.2)}
m_k^{\alpha}(\xi)=|k|^{\alpha}|\xi|^{-\alpha}\varphi(\xi-k),
\end{equation}
where $k \in \Z^n\setminus\{0\}$ and
$\varphi \in \calS(\R^n)$ is as in \eqref{(3.1)}.
Then
$\sup_{k \neq 0}\|m_k^{\alpha}(D)\|_{\calL(L^p)}<\infty$.
\end{lem}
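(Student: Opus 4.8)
The obstacle to overcome first is that $m_k^{\alpha}$ cannot be fed directly into Lemma \ref{2.2}: it is a bump of size $O(1)$ sitting at distance $|k|$ from the origin, so the Mikhlin scaling $|\xi|^{-|\gamma|}\sim|k|^{-|\gamma|}$ on its support is far too small to absorb the derivatives of the cut-off $\varphi(\cdot-k)$, which are of order $O(1)$. A direct estimate would therefore produce constants that blow up with $|k|$. My plan is to recenter the multiplier at the origin by a modulation before invoking Lemma \ref{2.2}.

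\emph{Step 1 (recentering).} Writing $m_k^{\alpha}(\xi)=g_k(\xi-k)$ with $g_k(\eta)=|k|^{\alpha}|k+\eta|^{-\alpha}\varphi(\eta)$, one checks from the definitions of $\calF$ and $\calF^{-1}$ that
\[
m_k^{\alpha}(D)f=e^{ik\cdot x}\,g_k(D)\!\left[e^{-ik\cdot x}f\right].
\]
Since multiplication by $e^{\pm ik\cdot x}$ is an isometry on $L^p$, this gives $\|m_k^{\alpha}(D)\|_{\calL(L^p)}\le\|g_k(D)\|_{\calL(L^p)}$, so it suffices to bound the latter uniformly in $k$. The gain is that $g_k$ is now supported in the fixed cube $[-3/4,3/4]^n$.

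\emph{Step 2 (comparability $|k+\eta|\sim|k|$).} For $\eta\in\mathrm{supp}\,\varphi\subset[-3/4,3/4]^n$ and $k\in\Z^n\setminus\{0\}$ I would show there are constants $0<c_1\le c_2$, independent of $k$, with $c_1|k|\le|k+\eta|\le c_2|k|$. The upper bound, and the lower bound for $|k|\ge\tfrac32\sqrt n$, follow from $|k+\eta|=|k|\pm O(\sqrt n)$ together with $|k|\ge1$; for the finitely many $k$ with $1\le|k|<\tfrac32\sqrt n$ one uses that $0\notin\mathrm{supp}\,\varphi(\cdot-k)$ (which would force $k=0$), so each such support has positive distance from the origin, and one takes the minimum over this finite set.

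\emph{Step 3 (uniform Mikhlin bound and conclusion).} By the Leibniz rule,
\[
\partial^{\gamma}g_k(\eta)=|k|^{\alpha}\sum_{\gamma'\le\gamma}\binom{\gamma}{\gamma'}\,\partial^{\gamma'}\!\bigl(|k+\eta|^{-\alpha}\bigr)\,\partial^{\gamma-\gamma'}\varphi(\eta),
\]
and, since $|\cdot|^{-\alpha}$ is positively homogeneous of degree $-\alpha$, one has $|\partial^{\gamma'}(|k+\eta|^{-\alpha})|\le C_{\gamma'}|k+\eta|^{-\alpha-|\gamma'|}$. Using Step 2 to replace $|k+\eta|$ by $|k|$ (the sign of the exponent being handled by $c_1,c_2$) and then $|k|^{-|\gamma'|}\le1$, each term is bounded by a constant independent of $k$, so $\sup_k\|\partial^{\gamma}g_k\|_{L^\infty}<\infty$ for every $\gamma$. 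Because $g_k$ is supported where $|\eta|\le\tfrac34\sqrt n$, we have $|\eta|^{-|\gamma|}\ge(\tfrac34\sqrt n)^{-|\gamma|}$ there, so this uniform $L^\infty$ bound upgrades to $|\partial^{\gamma}g_k(\eta)|\le C_{\gamma}|\eta|^{-|\gamma|}$ with $C_{\gamma}$ independent of $k$, for all $|\gamma|\le[n/2]+1$. Lemma \ref{2.2} then yields $\sup_k\|g_k(D)\|_{\calL(L^p)}<\infty$, and Step 1 finishes the proof. (Alternatively, Step 3 may be replaced by $\|g_k(D)\|_{\calL(L^p)}\le\|\calF^{-1}g_k\|_{L^1}$, bounding the $L^1$ norm through $(1-\Delta)^N g_k$ with $2N>n$; the uniform derivative bounds above are the only input needed.) The main point is Step 1, which is what makes the constants uniform.
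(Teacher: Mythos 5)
Your proof is correct and follows essentially the same route as the paper's: the paper likewise recenters via $\|m_k^{\alpha}(D)\|_{\calL(L^p)}=\|m_k^{\alpha}(D+k)\|_{\calL(L^p)}$ (your modulation identity in Step 1), observes that $|\xi+k|\ge 1/4$ and hence $|\xi+k|\sim|k|$ on $\mathrm{supp}\,\varphi$ for all $k\neq 0$, and then invokes Lemma \ref{2.2}. Your Steps 2 and 3 simply spell out the uniform Mikhlin estimate that the paper leaves implicit.
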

\begin{proof}
Our proof is similar to that of \cite[Theorem 20]{Feichtinger-Narimani}.
Since
$\|m_k^{\alpha}(D)\|_{\calL(L^p)}
=\|m_k^{\alpha}(D+k)\|_{\calL(L^p)}$,
by Lemma \ref{2.2},
it is enough to show that there exists a constant $C>0$ such that
\begin{equation}\label{(3.3)}
\sup_{\xi \neq 0}
|\xi|^{|\gamma|}|\partial^{\gamma}m_k^{\alpha}(\xi+k)|
=\sup_{\xi \neq 0}|\xi|^{|\gamma|}|\partial^{\gamma}
\left(|k|^{\alpha}|\xi+k|^{-\alpha}\varphi(\xi)\right)|
\le C
\end{equation}
for all $k \neq 0$ and $|\gamma| \le [n/2]+1$.
Since $\mathrm{supp}\, \varphi \subset [-3/4,3/4]^n$,
we see that $|\xi+k| \ge 1/4$
on $\mathrm{supp}\, \varphi$ for all $k \neq 0$.
Hence,
$|k| \sim |\xi+k|$ on $\mathrm{supp}\, \varphi$
for all $k \neq 0$.
This gives \eqref{(3.3)}.
\end{proof}
We are now ready to prove the $\lq\lq$if" part of Theorem \ref{1.2}.

\medskip
\noindent
{\it Proof of $\lq\lq$if" part of Theorem \ref{1.2}.}
Let $0<\beta\le\alpha<n$,
$1<p_1,p_2,q_1,q_2<\infty$,
$1/p_2\le 1/p_1-\alpha/n$
and $1/q_2<1/q_1+\beta/n$.
We first consider the case
$1/p_2=1/p_1-\alpha/n$ and $q_1>q_2$.
In view of \eqref{(2.1)},
\begin{equation}\label{(3.4)}
\begin{split}
\|I_{\alpha,\beta}f\|_{M^{p_2,q_2}}
&\le C\left(\sum_{k \in \Z^n}
\|\varphi(D-k)(I_{\alpha,\beta}f)\|_{L^{p_2}}^{q_2}\right)^{1/{q_2}} \\
&\le \|\varphi(D)(I_{\alpha,\beta}f)\|_{L^{p_2}}
+\left(\sum_{k \neq 0}
\|\varphi(D-k)(I_{\alpha,\beta}f)\|_{L^{p_2}}^{q_2}\right)^{1/{q_2}}
\end{split}
\end{equation}
where $\varphi$ is as in \eqref{(3.1)}.
Since $0<1/p_2+\beta/n\le 1/p_2+\alpha/n=1/p_1<1$,
we can take $1<\widetilde{p_1}<\infty$
such that $1/p_2=1/\widetilde{p_1}-\beta/n$.
Note that $p_1 \le \widetilde{p_1}$.
By the Hardy-Littlewood-Sobolev theorem and Lemma \ref{2.1},
we have
\begin{equation}\label{(3.5)}
\begin{split}
\|\varphi(D)(I_{\alpha,\beta}f)\|_{L^{p_2}}
&\le \|\varphi(D)(I_{\alpha}f)\|_{L^{p_2}}
+\|\varphi(D)(I_{\beta}f)\|_{L^{p_2}} \\
&= \|I_{\alpha}(\varphi(D)f)\|_{L^{p_2}}
+\|I_{\beta}(\varphi(D)f)\|_{L^{p_2}} \\
&\le C_{\alpha}\|\varphi(D)f\|_{L^{p_1}}
+C_{\beta}\|\varphi(D)f\|_{L^{\tilde{p_1}}}
\le C\|\varphi(D)f\|_{L^{p_1}} \\
&\le C\left(\sum_{k \in \Z^n}
\|\varphi(D-k)f\|_{L^{p_1}}^{q_1}\right)^{1/{q_1}}
\le C\|f\|_{M^{p_1,q_1}}
\end{split}
\end{equation}
for all $f \in \calS(\R^n)$.
Assume that $\psi \in \calS(\R^n)$ satisfies
$\psi=1$ on $\mathrm{supp}\, \varphi$,
$\mathrm{supp}\, \psi$ is compact and
$\left|\sum_{k \in \Z^n}\psi(\xi-k)\right| \ge C>0$
for all $\xi \in \R^n$.
Then,
\begin{align*}
\varphi(D-k)(I_{\alpha,\beta}f)
&=I_{\alpha,\beta}(\varphi(D-k)f)
=I_{\alpha,\beta}(\varphi(D-k)\psi(D-k)f) \\
&=[I_{\alpha}\, \varphi(D-k)](\psi(D-k)f)
+[I_{\beta}\, \varphi(D-k)](\psi(D-k)f) \\
&=|k|^{-\alpha}m_k^{\alpha}(D)(\psi(D-k)f)
+|k|^{-\beta}m_k^{\beta}(D)(\psi(D-k)f)
\end{align*}
for all $f \in \calS(\R^n)$ and $k \neq 0$,
where $m_k^{\alpha}$ and $m_k^{\beta}$ are defined by \eqref{(3.2)}.
Hence, by Lemmas \ref{2.1} and \ref{3.1}, we have
\begin{equation}\label{(3.6)}
\begin{split}
\|\varphi(D-k)(I_{\alpha,\beta}f)\|_{L^{p_2}}
&\le C(|k|^{-\alpha}+|k|^{-\beta})
\|\psi(D-k)f\|_{L^{p_2}} \\
&\le C|k|^{-\beta}\|\psi(D-k)f\|_{L^{p_2}}
\le C|k|^{-\beta}\|\psi(D-k)f\|_{L^{p_1}}
\end{split}
\end{equation}
for all $f \in \calS(\R^n)$ and $k \neq 0$.
Set $a(k)=|k|^{-\beta}$ if $k \neq 0$, and $a(0)=1$.
Note that $\{a(k)\} \in \ell^r(\Z^n)$,
where $1/r=1/q_2-1/q_1$.
Therefore, by \eqref{(3.6)} and H\"order's inequality,
we see that
\begin{equation}\label{(3.7)}
\begin{split}
&\left( \sum_{k \neq 0}
\|\varphi(D-k)(I_{\alpha,\beta}f)\|_{L^{p_2}}^{q_2} \right)^{1/{q_2}}
\le \left\{ \sum_{k \in \Z^n}
\left( a(k)\|\psi(D-k)f\|_{L^{p_1}}\right)^{q_2} \right\}^{1/{q_2}} \\
&\le \|\{a(k)\}\|_{\ell^r}
\left(\sum_{k \in \Z^n}
\|\psi(D-k)f\|_{L^{p_1}}^{q_1} \right)^{1/{q_1}}
\le C\|f\|_{M^{p_1,q_1}}
\end{split}
\end{equation}
for all $f \in \calS(\R^n)$.
Combining \eqref{(3.4)},
\eqref{(3.5)} and \eqref{(3.7)},
we obtain the desired result with
$1/p_2=1/p_1-\alpha/n$ and $q_1>q_2$.
\par
We next consider the case
$1/p_2=1/p_1-\alpha/n$ and $q_1 \le q_2$.
Since $\beta/n>0$,
we can take $1<\widetilde{q_2}<\infty$
such that $q_1>\widetilde{q_2}$
and $1/\widetilde{q_2}<1/q_1+\beta/n$.
Note that $q_2>\widetilde{q_2}$.
Then, by the preceding case,
we see that $I_{\alpha,\beta}$ is bounded
from $M^{p_1,q_1}(\R^n)$ to $M^{p_2,\widetilde{q_2}}(\R^n)$.
This implies that
$I_{\alpha,\beta}$ is bounded
from $M^{p_1,q_1}(\R^n)$ to $M^{p_2,q_2}(\R^n)$,
since $M^{p_2,\widetilde{q_2}}(\R^n) \hookrightarrow M^{p_2,q_2}(\R^n)$.
\par
Finally,
we consider the case $1/p_2<1/p_1-\alpha/n$.
Since $0<1/p_1-\alpha/n<1$,
we can take $1<\widetilde{p_2}<\infty$ such that
$1/\widetilde{p_2}=1/p_1-\alpha/n$.
Note that $p_2>\widetilde{p_2}$.
Then, by the preceding cases,
we see that $I_{\alpha,\beta}$ is bounded
from $M^{p_1,q_1}(\R^n)$ to $M^{\widetilde{p_2},q_2}(\R^n)$.
This implies that
$I_{\alpha,\beta}$ is bounded
from $M^{p_1,q_1}(\R^n)$ to $M^{p_2,q_2}(\R^n)$,
since $M^{\widetilde{p_2},q_2}(\R^n) \hookrightarrow M^{p_2,q_2}(\R^n)$.
The proof is complete.

\section
{Necessary condition for the boundedness of fractional integral operators}
Before proving the $\lq\lq$only if" part of Theorem \ref{1.2},
we give the following remark:
\begin{rem}\label{4.1}
Let $p_1 \ge p_2$ and $q_1 \ge q_2$.
In Introduction,
we have stated that $I_{\alpha}$ is not bounded
from $M^{p_1,q_1}(\R^n)$ to $M^{p_2,q_2}(\R^n)$.
In fact,
since $M^{p_2,q_2}(\R^n) \hookrightarrow M^{p_1,q_1}(\R^n)$,
if $I_{\alpha}$ is bounded
from $M^{p_1,q_1}(\R^n)$ to $M^{p_2,q_2}(\R^n)$
then $I_{\alpha}$ is bounded on $M^{p_1,q_1}(\R^n)$.
Then, by duality,
$I_{\alpha}$ is also bounded on $M^{p_1',q_1'}(\R^n)$.
By interpolation,
the boundedness on $M^{p_1,q_1}(\R^n)$
and on $M^{p_1',q_1'}(\R^n)$ implies that
$I_{\alpha}$ is bounded on $M^{2,2}(\R^n)$.
However, since $I_{\alpha}$ is not bounded on $L^2(\R^n)$
(\cite[p.119]{Stein}),
this is a contradiction.
Hence, $I_{\alpha}$ is not bounded
from $M^{p_1,q_1}(\R^n)$ to $M^{p_2,q_2}(\R^n)$.
\end{rem}
In the rest of the paper,
we prove the $\lq\lq$only if" part of Theorem \ref{1.2}.
\begin{lem}\label{4.2}
Let $0<\beta\le\alpha<n$ and $1<p_1,p_2,q_1,q_2<\infty$.
If $I_{\alpha,\beta}$ is bounded from
$M^{p_1,q_1}(\R^n)$ to $M^{p_2,q_2}(\R^n)$,
then
$1/p_2 \le 1/p_1-\alpha/n$.
\end{lem}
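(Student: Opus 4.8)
The plan is to argue by contraposition through a low-frequency dilation, exploiting that $|\xi|^{-\alpha}+|\xi|^{-\beta}\sim|\xi|^{-\alpha}$ near the origin so that the $I_\alpha$-part governs the behaviour there. First I would reduce the modulation-space estimate to a plain $L^p$ estimate on frequency-localized functions. If $\widehat{f}$ is supported in a fixed ball $\{|\xi|\le\rho\}$ with $\rho<1/4$ and away from the origin, then since $\varphi=1$ on $[-1/2,1/2]^n$ and $\mathrm{supp}\,\varphi(\cdot-k)\subset[-3/4,3/4]^n+k$ avoids $\{|\xi|<1/4\}$ for every $k\neq0$, only the $k=0$ term survives in \eqref{(2.1)}, giving $\|f\|_{M^{p,q}}\sim\|\varphi(D)f\|_{L^p}=\|f\|_{L^p}$ with a constant independent of the (sufficiently small) support. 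Because $\widehat{I_{\alpha,\beta}f}=(|\xi|^{-\alpha}+|\xi|^{-\beta})\widehat{f}$ has the same Fourier support, the assumed boundedness would yield $\|I_{\alpha,\beta}f\|_{L^{p_2}}\le C\|f\|_{L^{p_1}}$ for all such $f$.

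Next I would fix a Schwartz function $f_0$ with $\widehat{f_0}\ge0$, $\widehat{f_0}\not\equiv0$, supported in the annulus $\{1/2\le|\xi|\le1\}$, and set $\widehat{f_\lambda}(\xi)=\widehat{f_0}(\xi/\lambda)$, so that $f_\lambda(x)=\lambda^n f_0(\lambda x)$ has Fourier support in $\{\lambda/2\le|\xi|\le\lambda\}$. For $\lambda$ small this support lies in $\{|\xi|\le1/4\}$, so the reduction above applies. A change of variables gives $\|f_\lambda\|_{L^{p_1}}=\lambda^{n/p_1'}\|f_0\|_{L^{p_1}}$, while the exact homogeneity of $I_\alpha$ yields $I_\alpha f_\lambda=\lambda^{-\alpha}(I_\alpha f_0)_\lambda$, hence $\|I_\alpha f_\lambda\|_{L^{p_2}}=\lambda^{n/p_2'-\alpha}\|I_\alpha f_0\|_{L^{p_2}}$, and similarly with $\beta$. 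Since $\widehat{f_0}\ge0$ and $\widehat{f_0}\not\equiv0$ force $I_\alpha f_0\neq0$, i.e. $\|I_\alpha f_0\|_{L^{p_2}}>0$, I obtain the lower bound $\|I_{\alpha,\beta}f_\lambda\|_{L^{p_2}}\ge c\,\lambda^{n/p_2'-\alpha}$ for all small $\lambda$: when $\alpha=\beta$ this is immediate from $I_{\alpha,\beta}=2I_\alpha$, and when $\alpha>\beta$ it follows from the triangle inequality $\|I_{\alpha,\beta}f_\lambda\|_{L^{p_2}}\ge\lambda^{n/p_2'}(\lambda^{-\alpha}\|I_\alpha f_0\|_{L^{p_2}}-\lambda^{-\beta}\|I_\beta f_0\|_{L^{p_2}})$ together with the fact that $\lambda^{-\alpha}$ dominates $\lambda^{-\beta}$ as $\lambda\to0^+$.

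Combining the lower bound with the reduced estimate $\|I_{\alpha,\beta}f_\lambda\|_{L^{p_2}}\le C\|f_\lambda\|_{L^{p_1}}=C'\lambda^{n/p_1'}$ gives $\lambda^{n/p_2'-\alpha}\lesssim\lambda^{n/p_1'}$ as $\lambda\to0^+$, which forces $n/p_2'-\alpha\ge n/p_1'$; rewriting $1/p_i'=1-1/p_i$ this is exactly $1/p_2\le1/p_1-\alpha/n$. The main obstacle I anticipate is the first step: justifying that $\|f\|_{M^{p,q}}\sim\|f\|_{L^p}$ holds with a constant uniform in $\lambda$ as the Fourier support shrinks to the origin. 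This is precisely why the argument must use low frequencies ($\lambda\to0$) rather than high ones: a dilation toward infinity would spread the Fourier support over $\sim\lambda^n$ lattice cubes, so the equivalence would then involve the full $\ell^{q}$ sum and lose its uniformity, whereas near the origin a single lattice term controls the norm and the clean $L^p$ scaling survives.
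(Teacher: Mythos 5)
Your proof is correct and follows essentially the same route as the paper's: dilate a fixed test function toward low frequencies so that only the $k=0$ block in \eqref{(2.1)} survives, reduce both modulation norms to plain $L^{p_1}$ and $L^{p_2}$ norms, exploit the exact homogeneity of $I_\alpha$, handle $\alpha>\beta$ by the same triangle-inequality domination of the $I_\beta$ term for small $\lambda$, and compare the resulting powers of $\lambda$ as $\lambda\to 0^+$. The only substantive difference is your choice of Fourier support in an annulus away from the origin (versus the paper's $\mathrm{supp}\,\psi\subset[-1,1]^n$ containing the origin), which is a mild technical advantage: it makes $I_\alpha f_0$ and $I_\beta f_0$ Schwartz, so the finiteness of the $L^{p_2}$ norms needed for the subtraction step is automatic rather than implicit.
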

\begin{proof}
We only consider the case $\alpha>\beta$,
since the proof in the case $\alpha=\beta$ is simpler.
Let $\psi \in \calS(\R^n)\setminus\{0\}$ be such that
$\mathrm{supp}\, \psi \subset [-1,1]^n$.
Set $\Psi=\calF^{-1}\psi$ and
$\Psi_{\lambda}(x)=\Psi(\lambda x)$,
where $\lambda>0$.
Then
\begin{equation}\label{(4.1)}
\varphi(D-k)\Psi_{\lambda}=
\begin{cases}
\Psi_{\lambda} &\text{if $k=0$},
\\
0 &\text{if $k \neq 0$}
\end{cases}
\end{equation}
for all $0<\lambda<1/4$,
where $\varphi$ is as in \eqref{(3.1)}.
Similarly,
\begin{equation}\label{(4.2)}
\varphi(D-k)(I_{\alpha,\beta}\Psi_{\lambda})
=I_{\alpha,\beta}(\varphi(D-k)\Psi_{\lambda})=
\begin{cases}
I_{\alpha}\Psi_{\lambda}+I_{\beta}\Psi_{\lambda} &\text{if $k=0$},
\\
0 &\text{if $k \neq 0$}
\end{cases}
\end{equation}
for all $0<\lambda<1/4$.
By \eqref{(2.1)} and \eqref{(4.1)},
we see that
\begin{equation}\label{(4.3)}
\|\Psi_{\lambda}\|_{M^{p_1,q_1}}
\le C\left( \sum_{k \in \Z^n}
\|\varphi(D-k)\Psi_{\lambda}\|_{L^{p_1}}^{q_1}\right)^{1/q_1}
=C\|\Psi_{\lambda}\|_{L^{p_1}}=C\lambda^{-n/p_1}
\end{equation}
for all $0<\lambda<1/4$.
Since $\alpha>\beta$,
we can take $0<\lambda_0<1/4$ such that
$\|I_{\alpha}\Psi\|_{L^{p_2}}\lambda_0^{-\alpha}
>2\|I_{\beta}\Psi\|_{L^{p_2}}\lambda_0^{-\beta}$.
Note that $\|I_{\alpha}\Psi\|_{L^{p_2}}\lambda^{-\alpha}
>2\|I_{\beta}\Psi\|_{L^{p_2}}\lambda^{-\beta}$
for all $0<\lambda \le \lambda_0$.
Since
$I_{\alpha}\Psi_{\lambda}(x)
=\lambda^{-\alpha}(I_{\alpha}\Psi)(\lambda x)$,
by \eqref{(2.1)} and \eqref{(4.2)},
we see that
\begin{equation}\label{(4.4)}
\begin{split}
\|I_{\alpha,\beta}\Psi_{\lambda}\|_{M^{p_2,q_2}}
&\ge C\left( \sum_{k \in \Z^n}
\|\varphi(D-k)(I_{\alpha,\beta}\Psi_{\lambda})
\|_{L^{p_2}}^{q_2}\right)^{1/q_2} \\
&=C\|I_{\alpha}\Psi_{\lambda}+I_{\beta}\Psi\|_{L^{p_2}}
\ge C
\left(\|I_{\alpha}\Psi_{\lambda}\|_{L^{p_2}}
-\|I_{\beta}\Psi_{\lambda}\|_{L^{p_2}}\right) \\
&=C\lambda^{-n/p_2}\left( 
\lambda^{-\alpha}\|I_{\alpha}\Psi\|_{L^{p_2}}
-\lambda^{-\beta}\|I_{\beta}\Psi\|_{L^{p_2}}\right) \\
&\ge C\lambda^{-n/p_2}\left( 
\lambda^{-\alpha}\|I_{\alpha}\Psi\|_{L^{p_2}}/2\right)
=C\lambda^{-n/p_2-\alpha}
\end{split}
\end{equation}
for all $0<\lambda<\lambda_0$.
Hence, by \eqref{(4.3)} and \eqref{(4.4)},
if
$I_{\alpha,\beta}$ is bounded from
$M^{p_1,q_1}(\R^n)$ to $M^{p_2,q_2}(\R^n)$,
then
\[
C_1 \lambda^{-n/p_2-\alpha}
\le \|I_{\alpha,\beta}\Psi_{\lambda}\|_{M^{p_2,q_2}}
\le \|I_{\alpha,\beta}\|_{\mathrm{op}}
\|\Psi_{\lambda}\|_{M^{p_1,q_1}} \le C_2 \lambda^{-n/p_1}
\]
for all $0<\lambda<\lambda_0$.
This implies $-n/p_2-\alpha \ge -n/p_1$,
that is,
$1/p_2 \le 1/p_1-\alpha/n$.
The proof is complete.
\end{proof}
\begin{rem}\label{4.3}
Let $0<p<\infty$ and $N$ be a sufficiently large number.
Then
\[
\begin{cases}
|x|^{-n/p}(\log|x|)^{-\alpha/p}\chi_{\{|x|>N\}} \in L^p(\R^n),
&\text{if $\alpha>1$},
\\
|x|^{-n/p}(\log|x|)^{-\alpha/p}\chi_{\{|x|>N\}} \not\in L^p(\R^n),
&\text{if $\alpha\le 1$}.
\end{cases}
\]
In fact, by a change of variables,
\[
\int_{|x|>N}
|x|^{-n}\, (\log |x|)^{-\alpha}\, dx
=C_n \int_{N}^{\infty}r^{-n}\, (\log r)^{-\alpha}\, r^{n-1}dr
=C_n \int_{\log N}^{\infty} t^{-\alpha}\, dt.
\]
\end{rem}
\begin{lem}\label{4.4}
Let $0<\beta\le\alpha<n$ and $1<p_1,p_2,q_1,q_2<\infty$.
If $1/q_2=1/q_1+\beta/n$,
then $I_{\alpha,\beta}$ is not bounded from
$M^{p_1,q_1}(\R^n)$ to $M^{p_2,q_2}(\R^n)$.
\end{lem}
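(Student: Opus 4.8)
The plan is to disprove boundedness by producing a family $\{f_M\}\subset\calS(\R^n)$ that stays bounded in $M^{p_1,q_1}$ while $\|I_{\alpha,\beta}f_M\|_{M^{p_2,q_2}}\to\infty$; if $I_{\alpha,\beta}$ were bounded we would have $\|I_{\alpha,\beta}f_M\|_{M^{p_2,q_2}}\le\|I_{\alpha,\beta}\|_{\mathrm{op}}\sup_M\|f_M\|_{M^{p_1,q_1}}<\infty$, a contradiction. The construction is arranged so that, through \eqref{(2.1)}, both the input and the output norms collapse to weighted sequence norms, and the borderline hypothesis $1/q_2=1/q_1+\beta/n$ is exactly what makes the output series diverge logarithmically. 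Observe first that $\beta>0$ forces $q_2<q_1$.

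First I would fix a building block. Choose $h\in\calS(\R^n)\setminus\{0\}$ with $\mathrm{supp}\,\widehat h\subset[-1/8,1/8]^n$, and for $|k|\ge N$ consider the atoms $M_k h$, so that $\widehat{M_k h}=\widehat h(\cdot-k)$ is supported in $k+[-1/8,1/8]^n$. With $\varphi$ as in \eqref{(3.1)}, these cubes lie inside $\{\varphi(\cdot-k)=1\}$ and are separated from $\mathrm{supp}\,\varphi(\cdot-j)$ for every integer $j\neq k$; hence the decomposition is exactly diagonal, $\varphi(D-j)(M_k h)=\delta_{jk}M_k h$, and since $I_{\alpha,\beta}$ is a Fourier multiplier it preserves these supports, so $\varphi(D-j)(I_{\alpha,\beta}M_k h)=\delta_{jk}\,I_{\alpha,\beta}M_k h$. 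For $k\neq0$ the frequency support avoids the origin, so $I_{\alpha,\beta}M_k h$ is well defined. I then set, for a large integer $N$ and $M>N$,
\[
f_M=\sum_{N\le|k|\le M}c_k\,M_k h,\qquad c_k=|k|^{-n/q_1}(\log|k|)^{-1/q_2},
\]
a finite sum, hence Schwartz.

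By \eqref{(2.1)}, diagonality, and modulation invariance of the $L^p$ norm, $\|f_M\|_{M^{p_1,q_1}}\sim\|h\|_{L^{p_1}}\bigl(\sum_{N\le|k|\le M}|c_k|^{q_1}\bigr)^{1/q_1}$; since $|c_k|^{q_1}=|k|^{-n}(\log|k|)^{-q_1/q_2}$ with $q_1/q_2>1$, comparison of the sum with the integral in Remark \ref{4.3} gives $\sum_{|k|\ge N}|c_k|^{q_1}<\infty$, so $\sup_M\|f_M\|_{M^{p_1,q_1}}<\infty$. For the output, the same diagonality yields $\|I_{\alpha,\beta}f_M\|_{M^{p_2,q_2}}\gtrsim\bigl(\sum_{N\le|k|\le M}|c_k|^{q_2}\|I_{\alpha,\beta}M_k h\|_{L^{p_2}}^{q_2}\bigr)^{1/q_2}$, and the crucial estimate is $\|I_{\alpha,\beta}M_k h\|_{L^{p_2}}\sim|k|^{-\beta}$ uniformly for $|k|\ge N$. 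Granting this, $|c_k|^{q_2}\|I_{\alpha,\beta}M_k h\|_{L^{p_2}}^{q_2}\sim|k|^{-(n/q_1+\beta)q_2}(\log|k|)^{-1}=|k|^{-n}(\log|k|)^{-1}$, because $1/q_2=1/q_1+\beta/n$ is precisely $(n/q_1+\beta)q_2=n$; by Remark \ref{4.3} this sum diverges, so $\|I_{\alpha,\beta}f_M\|_{M^{p_2,q_2}}\to\infty$, the desired contradiction.

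The main obstacle is the uniform two-sided estimate $\|I_{\alpha,\beta}M_k h\|_{L^{p_2}}\sim|k|^{-\beta}$, especially its lower half. I would prove it by rescaling the symbol: writing $I_{\alpha,\beta}M_k h=|k|^{-\beta}M_k\,\calF^{-1}[\mu_k\widehat h]$ with $\mu_k(\xi)=|k|^{\beta}\bigl(|\xi+k|^{-\beta}+|\xi+k|^{-\alpha}\bigr)$, one checks that on the fixed cube $\mathrm{supp}\,\widehat h$ the symbols $\mu_k$ converge, together with all derivatives, to a positive constant ($1$ if $\alpha>\beta$, $2$ if $\alpha=\beta$) as $|k|\to\infty$; here $\alpha\ge\beta$ guarantees a positive limit, covering both cases at once. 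Hence $\mu_k\widehat h$ converges in $\calS$, so $\|\calF^{-1}[\mu_k\widehat h]\|_{L^{p_2}}\to c_{\alpha,\beta}\|h\|_{L^{p_2}}>0$, giving both bounds for $N$ large; the upper bound alternatively follows from Lemmas \ref{2.1} and \ref{3.1}. The remaining points—diagonality of the block splitting and the reduction of both modulation norms to $\ell^{q_i}$ sums—are routine given \eqref{(2.1)}.
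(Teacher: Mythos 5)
Your proof is correct, and it follows the same overall strategy as the paper's --- frequency-localized atoms $M_kh$ (the paper's $M_k\Psi$), exact diagonality under $\varphi(D-k)$ with $\varphi$ as in \eqref{(3.1)}, reduction of both modulation norms to weighted $\ell^{q_i}$ sums via \eqref{(2.1)}, coefficients $|k|^{-n/q_1}(\log|k|)^{-\delta}$ tuned so that the input sum converges while the critical relation $(n/q_1+\beta)q_2=n$ forces logarithmic divergence of the output sum (Remark \ref{4.3} in both cases), and the crucial atom estimate $\|I_{\alpha,\beta}(M_kh)\|_{L^{p_2}}\gtrsim |k|^{-\beta}$ --- but it differs in execution in two ways worth noting. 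First, the paper constructs one fixed $f\in M^{p_1,q_1}$ (an infinite sum with log-exponent $-(1+\epsilon)/q_1$, where $\epsilon$ is chosen so that $(1+\epsilon)q_2/q_1<1$) and shows $I_{\alpha,\beta}f\notin M^{p_2,q_2}$, which is slightly stronger than mere unboundedness; your family of finite truncations $f_M$, with the clean borderline exponent $-1/q_2$, yields unboundedness by an operator-norm contradiction, dispensing with the $\epsilon$ bookkeeping and with any question of distributional convergence of the infinite series, at the small price of not exhibiting a single bad $f$. Second --- the genuinely different ingredient --- for the lower bound on $\|I_{\alpha,\beta}(M_kh)\|_{L^{p_2}}$ the paper applies Lemma \ref{3.1} twice, to the symbols $m_k^{\alpha}$ and $m_k^{-\beta}$, and then uses the reverse triangle inequality after choosing $N$ so large that the $I_\alpha$ contribution is at most half the $I_\beta$ contribution; you instead rescale the full symbol and show $\mu_k\widehat h\to c\,\widehat h$ in $\calS$ on the fixed compact support of $\widehat h$, which gives the two-sided estimate $\|I_{\alpha,\beta}(M_kh)\|_{L^{p_2}}\sim|k|^{-\beta}$ without invoking any multiplier theorem, and treats $\alpha=\beta$ and $\alpha>\beta$ uniformly, whereas the paper writes out only $\alpha>\beta$ and asserts that the equal case is simpler. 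Both routes ultimately rest on the same geometric fact that $|\xi+k|\sim|k|$ on the fixed frequency support, so the approaches are close in spirit, but yours is somewhat more self-contained at this step.
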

\begin{proof}
We only consider the case $\alpha>\beta$,
since the proof in the case $\alpha=\beta$ is simpler.
Let $\varphi \in \calS(\R^n)$ be as in \eqref{(3.1)}.
Set
$C_{\alpha}=\sup_{k \neq 0}\|m_k^{\alpha}(D)\|_{\calL(L^{p_2})}$
and
$C_{\beta}=\sup_{k \neq 0}\|m_k^{-\beta}(D)\|_{\calL(L^{p_2})}$,
where $m_k^{\alpha}$ and $m_k^{-\beta}(D)$
are defined by \eqref{(3.2)} with $\varphi$.
Since $\alpha>\beta$,
we can take a sufficiently large natural number $N$
such that
$C_{\beta}^{-1}N^{-\beta}>2C_{\alpha}N^{-\alpha}$.
Then
\begin{equation}\label{(4.5)}
C_{\beta}^{-1}|k|^{-\beta}>2C_{\alpha}|k|^{-\alpha}
\qquad \text{for all $|k| \ge N$}.
\end{equation}
Since $1/q_2>1/q_1$,
we can take $\epsilon>0$ such that $(1+\epsilon)q_2/q_1<1$.
For these $\epsilon$ and $N$,
set
\[
f(x)=
\sum_{|\ell|>N}|\ell|^{-n/q_1}\, (\log|\ell|)^{-(1+\epsilon)/q_1}\,
e^{i\ell\cdot x}\, \Psi(x),
\]
where $\psi \in \calS(\R^n)\setminus\{0\}$ satisfies
$\mathrm{supp}\, \psi \subset [-1/4,1/4]^n$
and $\Psi=\calF^{-1}\psi$.
Since $\varphi=1$ on $[-1/2,1/2]^n$
and $\mathrm{supp}\, \varphi \subset [-3/4,3/4]^n$,
\begin{equation}\label{(4.6)}
\varphi(D-k)f(x)=
\begin{cases}
|k|^{-n/q_1}\, (\log|k|)^{-(1+\epsilon)/q_1}\,
e^{ik\cdot x}\, \Psi(x)
&\text{if $|k|>N$},
\\
0 &\text{if $|k| \le N$}.
\end{cases}
\end{equation}
Similarly,
\begin{equation}\label{(4.7)}
\varphi(D-k)I_{\alpha,\beta}f(x)=
\begin{cases}
|k|^{-n/q_1}\, (\log|k|)^{-(1+\epsilon)/q_1}\,
I_{\alpha,\beta}(M_k\Psi)(x)
&\text{if $|k|>N$},
\\
0 &\text{if $|k| \le N$},
\end{cases}
\end{equation}
where $M_k\Psi(x)=e^{ik\cdot x}\Psi(x)$.
By \eqref{(4.6)}, we have
\[
\|\varphi(D-k)f\|_{L^{p_1}}=
\begin{cases}
\|\Psi\|_{L^{p_1}}|k|^{-n/q_1}\, (\log|k|)^{-(1+\epsilon)/q_1}
&\text{if $|k|>N$},
\\
0 &\text{if $|k| \le N$}.
\end{cases}
\]
Then, by Remark \ref{4.3},
we see that $f \in M^{p_1,q_1}(\R^n)$.
On the other hand,
since
\begin{align*}
I_{\alpha}(M_k\Psi)
&=\calF^{-1}\left[ |\xi|^{-\alpha}\psi(\xi-k)\right] \\
&=|k|^{-\alpha}
\calF^{-1}\left[ \left(|k|^{\alpha}|\xi|^{-\alpha}\varphi(\xi-k)\right)
\psi(\xi-k)\right]
=|k|^{-\alpha}m_k^{\alpha}(D)(M_k\Psi)
\end{align*}
and
\begin{align*}
|k|^{-\beta}M_k\Psi
&=\calF^{-1}\left[
|k|^{-\beta}\psi(\xi-k)\right] \\
&=\calF^{-1}\left[
\left(|k|^{-\beta}|\xi|^{\beta}\varphi(\xi-k)\right)
\left(|\xi|^{-\beta}\psi(\xi-k)\right)\right]
=m_k^{-\beta}(D)I_{\beta}(M_k\Psi),
\end{align*}
by Lemma \ref{3.1},
we have
\[
\|I_{\alpha}(M_k\Psi)\|_{L^{p_2}}
\le |k|^{-\alpha}\|m_k^{\alpha}(D)\|_{\calL(L^{p_2})}
\|M_k\Psi\|_{L^{p_2}}
\le C_{\alpha}|k|^{-\alpha}\|M_k\Psi\|_{L^{p_2}}
\]
and
\[
\|M_k\Psi\|_{L^{p_2}}
\le  |k|^{\beta}\|m_k^{-\beta}(D)\|_{\calL(L^{p_2})}
\|I_{\beta}(M_k\Psi)\|_{L^{p_2}}
\le C_{\beta}|k|^{\beta}\|I_{\beta}(M_k\Psi)\|_{L^{p_2}}
\]
for all $|k|>N$.
Hence, by \eqref{(4.5)},
\begin{equation}\label{(4.8)}
\begin{split}
&\|I_{\alpha,\beta}(M_k\Psi)\|_{L^{p_2}}
\ge \|I_{\beta}(M_k\Psi)\|_{L^{p_2}}
-\|I_{\alpha}(M_k\Psi)\|_{L^{p_2}} \\
&\ge \left( C_{\beta}^{-1}|k|^{-\beta}-C_{\alpha}|k|^{-\alpha}\right)
\|M_k\Psi\|_{L^{p_2}}
\ge \left( C_{\beta}^{-1}|k|^{-\beta}/2\right)
\|\Psi\|_{L^{p_2}}
=C|k|^{-\beta}
\end{split}
\end{equation}
for all $|k|>N$.
Then,
it follows from \eqref{(4.7)} and \eqref{(4.8)} that
\begin{align*}
\|\varphi(D-k)(I_{\alpha,\beta}f)\|_{L^{p_2}}
&\ge C|k|^{-n/q_1-\beta}\, (\log|k|)^{-(1+\epsilon)/q_1} \\
&=C|k|^{-n/q_2}\, (\log|k|)^{-\{(1+\epsilon)q_2/q_1\}/q_2}
\end{align*}
for all $|k|>N$.
Also,
$\|\varphi(D-k)(I_{\alpha,\beta}f)\|_{L^{p_2}}=0$ if $|k|\le N$.
Since $(1+\epsilon)q_2/q_1<1$,
by Remark \ref{4.3}, we have
$\{|k|^{-n/q_2}(\log|k|)^{-\{(1+\epsilon)q_2/q_1\}/q_2}\}_{|k|>N}
\not\in \ell^{q_2}(\Z^n)$.
This implies
$\left(\sum_{k \in \Z^n}
\|\varphi(D-k)(I_{\alpha,\beta }f)\|_{L^{p_2}}^{q_2}\right)^{1/q_2}=\infty$,
that is, $I_{\alpha,\beta}f \not\in M^{p_2,q_2}(\R^n)$.
Therefore,
$I_{\alpha,\beta}$ is not bounded
from $M^{p_1,q_1}(\R^n)$ to $M^{p_2,q_2}(\R^n)$.
The proof is complete.
\end{proof}
We are now ready to prove the $\lq\lq$only if" part of Theorem \ref{1.2}.

\medskip
\noindent
{\it Proof of $\lq\lq$only if" part of Theorem \ref{1.2}.}
Let $0<\beta\le\alpha<n$ and $1<p_1,p_2,q_1,q_2<\infty$.
Assume that $I_{\alpha,\beta}$ is bounded
from $M^{p_1,q_1}(\R^n)$ to $M^{p_2,q_2}(\R^n)$.
Then, by Lemma \ref{4.2},
we see that $1/p_2 \le 1/p_1-\alpha/n$.
On the other hand,
if $1/q_2 \ge 1/q_1+\beta/n$
then $I_{\alpha,\beta}$ is bounded
from $M^{p_1,q_1}(\R^n)$ to $M^{p_2,\widetilde{q_2}}(\R^n)$,
since
$M^{p_2,q_2}(\R^n) \hookrightarrow M^{p_2,\widetilde{q_2}}(\R^n)$,
where $1/\widetilde{q_2}=1/q_1+\beta/n$.
However, this contradicts Lemma \ref{4.4}.
Hence, $1/q_2<1/q_1+\beta/n$.
The proof is complete.


\end{document}